\newcommand{\pedro}{\ifthenelse{\boolean{pedro}}{\color{black}
    \setboolean{pedro}{false}}{\color{black}\setboolean{pedro}{true}}}
\newtheorem{theorem}{Theorem}
\newtheorem*{theorem*}{Theorem}
\newtheorem{lemma}{Lemma}
\newtheorem*{lemma*}{Lemma}
\newtheorem{definition}{Definition}
\newcommand{\hot}{\ensuremath{{h.o.t.}}}
\newcommand{\cc}{\ensuremath{(\mathbb{C}^{2},0)}}
\newcommand{\ord}{\ensuremath{\mathrm{ord}}}
\title{Vector flows and the analytic moduli of singular plane branches}
\author{P. Fortuny Ayuso}
\email{fortunypedro@uniovi.es}
\address{Dpt. of Mathematics, University of Oviedo, Spain}
\date{\today}
\dedicatory{To Prof. Felipe Cano on his sixtieth birthday.}
\subjclass[2010]{32S05, 32S65, 14H20}
\begin{document}

\begin{abstract}
  We provide a geometric elementary proof of the fact that an analytic plane branch is analytically equivalent to one whose terms corresponding to contacts with holomorphic one-forms ---except for Zarkiski's $\lambda$-invariant--- are zero (so called ``short parametrizations''). This is the main step missed by Zariski in his attempt to solve the moduli problem.
\end{abstract}
\maketitle
 
\section{Introduction and notation}
The classification of germs of irreducible analytic plane curves (usually called
branches) $\Gamma\subset \cc$ under analytic equivalence was an open problem
until \cite{Hefez-Hernandes-classification}, where a complete description of the
moduli of $\Gamma$ is given in terms of what the authors call a \emph{normal
  form}. Since Zariski's monograph \cite{Zariski4} (whose English translation we use), little was achieved in
terms of finding new analytic invariants before the work of Hefez and
Hernandes. Their classification result uses
mainly algebraic tools and is based on the Complete Transversal Theorem of
\cite{bruce-kirk-dupleiss}.

Let $\Gamma\subset\cc$ be an analytic branch. It is well-known (see Theorem
2.2.6 of \cite{Wall}, for instance) that it
admits a Puiseux parametrization $\phi(t)$:
\begin{equation}\label{eq:puiseux}
  \Gamma=\phi(t)=(x(t),y(t))=\bigg(
    t^n, t^m + \sum_{i>m} a_it^i
  \bigg),
\end{equation}
where $n\leq m$ and $\phi:(\mathbb{C},0)\rightarrow\cc$ is an injective
map. Moreover, after possibly an algebraic change of variables of the form
$y=y-s(x)$, we may assume that $n$ does not divide $m$ (which we shall write
$n\nmid m$). Zariski, in \cite{Zariski4}, tries to reduce the analytic classification
to finding the simplest parametrization like \eqref{eq:puiseux}, \emph{i.e.} 
having as many coefficients $a_i$ equal to $0$ as possible. He finds several
conditions which allow him to compute the moduli in some elementary cases albeit in a
rather convoluted way. As a matter of fact, the classification result of
\cite{Hefez-Hernandes-classification} consists in completely describing those
``simple'' parametrizations, which they aptly call ``normal forms''. Before
proceeding any further, we introduce the basic notation.

Given $\Gamma$, we may assume after an analytic change of coordinates that it
has a parametrization \eqref{eq:puiseux}. The \emph{semigroup} $S_{\Gamma}$ of
$\Gamma$ is the additive subsemigroup of $\mathbb{N}$:
\begin{equation*}
  S_{\Gamma} = \left\{
    \ord_t f(\phi(t)): f(x,y)\in \mathbb{C}\left\{ x,y \right\}
  \right\}
\end{equation*}
(which order is, for each $f(x,y)$, the intersection multiplicity of the curve
$\Delta\equiv (f(x,y)=0)$ with $\Gamma$). The \emph{conductor} of $S_{\Gamma}$
is the least $c\in S_{\Gamma}$ such that any $k\in \mathbb{N}$ greater than or
equal to $c$ belongs to $S_{\Gamma}$ (such a $c$ is guaranteed to exist). The
number $c$ satisfies
\begin{equation*}
  c = \dim_{\mathbb{C}}\mathbb{C}\left\{ t \right\}/J
\end{equation*}
where $J$ is the conductor ideal of $\mathbb{C}\left\{ x,y \right\}/(f(x,y))$ in
$\mathbb{C}\left\{ t \right\}$ via the parametrization given above (for the
definition of the conductor of a semigroup and the previous properties, see
\cite{Casas} Prop. 5.8.6 and the paragraph before, for instance).

Zariski points out that one can easily eliminate all the terms belonging to the
semigroup from a Puiseux expansion. But the first strictly analytic invariant
found by him is now called the $\lambda$-invariant:
\begin{theorem*}[\cite{Zariski-1966}, see also \cite{Zariski4} pp. 22, 23]
  Either $\Gamma$ is analytically equivalent to $(t^n, t^m)$ or there is
  $\lambda\in \mathbb{N}$ with $m<\lambda<c$ such that $\Gamma$ is analytically
  equivalent to
  \begin{equation*}
    \bigg(t^n,
    t^m + t^{\lambda} + \sum_{i>\lambda}^{c-1}\overline{a}_it^k
    \bigg)
  \end{equation*}
  where $\overline{a}_i=0$ if $i\in S_{\Gamma}$ or $i+n=pm$ for $p>1$.
\end{theorem*}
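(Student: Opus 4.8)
The plan is to normalize the Puiseux series $y(t)=t^m+\sum_{i>m}a_it^i$ by eliminating monomials one exponent at a time, using the combined action of ambient automorphisms of $\cc$ and reparametrizations $t\mapsto\psi(t)$, realized as flows of vector fields. The working principle is infinitesimal: a vector field $X=a(x,y)\pt{x}+b(x,y)\pt{y}$ on $\cc$ deforms $\phi$ in the direction $(a(\phi),b(\phi))$, while an infinitesimal reparametrization $\xi(t)\partial_t$ with $\xi(0)=0$ deforms it by $\xi(t)\phi'(t)$; combining the two so as to keep the first coordinate equal to $t^n$ isolates the admissible deformations of $y(t)$.

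First I would dispose of the semigroup exponents. If $i\in S_{\Gamma}$ there is $f\in\mathbb{C}\{x,y\}$ with $\ord_t f(\phi)=i$, so the field $f(x,y)\pt{y}$ deforms $y(t)$ by $f(\phi)=t^i+\hot$ without touching $x=t^n$; flowing along it removes the coefficient $a_i$ while affecting only strictly higher-order terms. Running this for increasing $i$ kills every surviving term with exponent in $S_{\Gamma}$; in particular, since every $k\geq c$ lies in $S_{\Gamma}$, all exponents $\geq c$ are removable. If nothing survives, then $\Gamma\sim(t^n,t^m)$, which is the first alternative. Otherwise let $\lambda$ be the least surviving exponent: then $m<\lambda$, and $\lambda\notin S_{\Gamma}$ forces $\lambda<c$; a rescaling $t\mapsto\mu t$ (compensated by the linear ambient automorphism $x\mapsto\mu^{-n}x$, $y\mapsto\mu^{-m}y$ so as to preserve $(t^n,t^m)$) normalizes $a_{\lambda}=1$, yielding the term $t^{\lambda}$.

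The decisive step is to identify the remaining removable exponents as those with $i+n=pm$, $p>1$. Here the relevant fields are the \emph{horizontal} ones $y^{\,p-1}\pt{x}$: since $y^{\,p-1}\pt{x}$ deforms $\phi$ by $(t^{(p-1)m}+\hot,\,0)$, the unique holomorphic reparametrization $\xi(t)\partial_t$ that cancels the first coordinate is $\xi=-t^{(p-1)m-n+1}/n+\hot$, and it feeds back into the second coordinate a deformation of $y(t)$ of order exactly $pm-n$. Thus each monomial $t^{pm-n}$ is infinitesimally removable, and a bookkeeping of the orders produced by all monomial fields $x^{\alpha}y^{\beta}\pt{x}$ and $x^{\alpha}y^{\beta}\pt{y}$ (which yield, respectively, $y$-deformations of orders $(\alpha-1)n+(\beta+1)m$ and $\alpha n+\beta m$) shows that the removable exponents $>m$ are precisely $S_{\Gamma}\cup\{\,pm-n:p>1\,\}$ and no others. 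Eliminating these for $\lambda<i<c$ leaves coefficients $\overline{a}_i$ with $\overline{a}_i=0$ whenever $i\in S_{\Gamma}$ or $i+n=pm$, which is the asserted normal form.

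The hard part is not the infinitesimal bookkeeping but its integration: each elimination above is an order-by-order statement about tangent directions, and one must show that the resulting infinite sequence of coordinate changes and reparametrizations composes to a genuine \emph{convergent} analytic equivalence. This is exactly where the vector-flow technique is needed — organizing the eliminations as flows of convergent vector fields so that the formal normalization is realized by an honest element of $\mathrm{Aut}(\cc)$ rather than by a merely formal power-series change of coordinates.
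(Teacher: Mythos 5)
Your infinitesimal bookkeeping is sound, and your generators are in fact exactly the ones this paper uses: for $i\in S_{\Gamma}$ the vertical field $f(x,y)\pt{y}$ with $\ord_t f(\phi)=i$ (the field dual to the form $f\,dx$), and for $i+n=pm$ the horizontal field $y^{p-1}\pt{x}$ (dual to $y^{p-1}dy$), corrected by a reparametrization in $t$. But your closing paragraph concedes precisely the point that \emph{is} the proof, and nothing in your text supplies it: you never show that the \emph{finite-time} flow of these fields acts on the Puiseux expansion the way its first-order term in $s$ suggests. For the vertical fields this is nearly immediate (the flow fixes $x=t^n$, and the terms of order $\geq 2$ in $s$ have $t$-order $>i$ because $f\in(x,y)^2$), but for the horizontal fields it is not: the time-$s$ flow destroys the form $x=t^n$, and after the compensating reparametrization one must prove that no coefficient of order $<pm-n$ is disturbed by the higher-order terms in $s$ of the flow, and that the coefficient of $t^{pm-n}$ moves exactly by $\beta s$ with $\beta\neq 0$, so that the non-infinitesimal choice $s=-a_{pm-n}/\beta$ kills it. This is the entire technical content of the paper: the Contact Transfer Lemma, the inductive comparison (Lemma \ref{lem:solutions-are-approximate}) of the flow of $E$ with the flow of the auxiliary field $\overline{E}$ tangent to $\Gamma$, and the reparametrization analysis of Lemma \ref{lem:reparametrization}; one also needs the flow to exist up to the time $-a_j/\beta$, which the paper gets from the field being autonomous with a singular point at the origin. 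Without these steps your argument only establishes that the monomials in question are tangent to the orbit of $\phi$, i.e.\ a first-order statement --- essentially what Zariski already knew, and not the theorem.

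Two further points. First, your claim that the removable exponents $>m$ are ``precisely $S_{\Gamma}\cup\{pm-n:p>1\}$ and no others'' is false in general: the removable exponents beyond $\lambda$ are those $i$ with $i+n\in\Lambda$, and $\Lambda$ typically strictly contains $(S_{\Gamma}+n)\cup\{pm:p>1\}$ --- this surplus is exactly what the paper's main theorem (and the Hefez--Hernandes classification) is about. The error is harmless for Zariski's statement, which needs only one inclusion, but the sentence should be deleted. Second, your elimination ``for increasing $i$'' over all of $S_{\Gamma}$, including all $i\geq c$, is an infinite composition of automorphisms whose convergence you do not address; the standard repair (also the one the paper invokes) is to remove all terms of order $\geq c$ at once by a single diffeomorphism (\cite{Zariski4}, Ch.~III, Sec.~1) and run the flow argument only for the finitely many exponents below $c$.
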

Even more; let $\Lambda$ be the set of contact orders of holomorphic
differentials with the curve $\Gamma$ (this set is implicitly used by Zariski in
\cite{Zariski-1966} and \cite{Zariski4} but a formal definition first appeared,
as far as we are aware, in \cite{Hefez-Hernandes-classification} pg. 291):
\begin{equation*}
  \Lambda = \left\{
    \ord_t(\phi^{\ast}\omega(x,y))+1: \omega(x,y)=A(x,y)dx+B(x,y)dy
  \right\}
\end{equation*}
(the $+1$ is added for simplicity: notice that the $dx$ in $\omega(x,y)$
provides a factor $nt^{n-1}dt$, so that $\ord_t\phi^{\ast}\omega$ is always at
least $n-1$). With this definition, Zariski's invariant is just
\begin{equation*}
  \lambda = \min\{i\in \Lambda, i\not\in S_{\Gamma}\}-n,
\end{equation*}
and $\lambda=\infty$ if and only if $\Gamma$ is analytically equivalent to the
cusp $(t^n, t^m)$ (these are the main results of \cite{Zariski-1966}).  From
this starting point, Zariski in \cite{Zariski4} tried (implicitly, as he never
used $\Lambda$ explicitly) to relate the set $\Lambda$ and the analytic moduli
of $\Gamma$ without success. The classification result of
\cite{Hefez-Hernandes-classification} shows how $\Lambda$ is, to all extents,
the \emph{main} analytic invariant:
\begin{theorem*}[\cite{Hefez-Hernandes-classification}, Theorem 2.1]
  The branch $\Gamma$ is either analytically equivalent to $(t^n, t^m)$ or
  $\lambda<c$ and it is equivalent to a parametrization (normal form)
  \begin{equation*}
    \bigg(
    t^n, t^{m} + t^{\lambda} + \sum_{\substack{i>\lambda\\ i+n\not\in \Lambda}}
    \tilde{a}_{i}t^i
    \bigg).
  \end{equation*}
  Moreover, two such parametrizations (with $\tilde{a}_i$ and
  $\tilde{a}_i^{\prime}$, respectively) corresponding to branches with same
  semigroup and same set of contacts $\Lambda$, are equivalent if and only if
  there is $r\in \mathbb{C}^{\ast}$ with $r^{\lambda-m}=1$ and
  $\tilde{a}_i=r^{i-m}\tilde{a}^{\prime}_i$.
\end{theorem*}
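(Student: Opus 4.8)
The plan is to present analytic equivalence as the action of the group of pairs $(H,h)$ — an automorphism $H$ of \cc\ together with a reparametrization $h$ of $(\mathbb{C},0)$ — on parametrizations of the form \eqref{eq:puiseux}, and to normalize an orbit representative by integrating the flows of holomorphic vector fields. First I would linearize the action: writing $H_s=\mathrm{id}+sX+\cdots$ with $X=P\pt{x}+Q\pt{y}$, $P,Q$ in the maximal ideal, and $h_s(t)=t+s\xi(t)+\cdots$, the infinitesimal deformation of $\phi$ is $\dot\phi=X(\phi(t))-\xi(t)\phi'(t)$. Imposing that $x=t^n$ be preserved forces $\xi=P(\phi)/(nt^{n-1})$, and the first-order change of the second coordinate then satisfies $nt^{n-1}\dot y = Q(\phi)x'(t)-P(\phi)y'(t)$, which up to sign is the coefficient of $\phi^{\ast}\omega$ for the one-form $\omega=-Q\,\dx+P\,\dy$. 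Hence $\ord_t\dot y=\ord_t\phi^{\ast}\omega-(n-1)=v(\omega)-n$, where $v(\omega)=\ord_t\phi^{\ast}\omega+1$. This is the crucial duality: the orders at which a coordinate change can alter the leading behaviour of $y$ are exactly the $i$ with $i+n\in\Lambda$, precisely the monomials excluded from the normal form.

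Next I would establish existence of the short parametrization by eliminating, in order of increasing degree, every removable monomial. Zariski's theorem quoted above fixes the first genuinely analytic datum: after clearing semigroup terms, the lowest order not in $S_{\Gamma}$ carrying a contact is $\lambda+n$, so $t^{\lambda}$ may be normalized to coefficient $1$. For each $i>\lambda$ with $i+n\in\Lambda$ I would choose a one-form $\omega$ realizing $v(\omega)=i+n$ (for such $i>\lambda\ge m$ the value is large enough that $\omega$ must have coefficients in the maximal ideal, so it genuinely arises from a vector field $X$), and use the time-one flow of $X$ — an honest analytic automorphism — to remove the corresponding term. Because $S_{\Gamma}\subseteq\Lambda$ and $S_{\Gamma}$ contains every integer $\ge c$, the indices $i$ with $i+n\notin\Lambda$ satisfy $\lambda<i<c-n$, so only finitely many terms survive and the normalization is a finite composition of flows; performing the eliminations in increasing order guarantees that no step disturbs an already-normalized lower-order term.

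Finally, for the equivalence criterion I would compute the residual symmetry. The homothety $t\mapsto rt$ followed by the linear automorphism $(x,y)\mapsto(r^{-n}x,r^{-m}y)$ preserves the shape $x=t^n$, $y=t^m+\cdots$, multiplying the coefficient of $t^{\lambda}$ by $r^{\lambda-m}$ and each $\tilde a_i$ by $r^{i-m}$; keeping $t^{\lambda}$ normalized forces $r^{\lambda-m}=1$, which yields exactly the stated relation $\tilde a_i=r^{i-m}\tilde a_i'$. The converse — that every analytic equivalence between two normal forms reduces to such a homothety — follows from the rigidity produced by the previous step: once all contact monomials are removed, the tangent space to the orbit meets the normal-form slice only in the directions generated by this finite group, so the isotropy computation closes the uniqueness statement.

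The main obstacle is the passage from the infinitesimal duality to genuine analytic equivalences. Knowing that $i+n\in\Lambda$ makes the term $a_it^i$ removable to first order is not enough; one must realize the elimination by the flow of a convergent holomorphic vector field and control the higher-order corrections it introduces, showing they never re-create previously cleared monomials. This is exactly where the vector-flow technology of the paper does the work: it converts the first-order statement into a finite, convergent sequence of honest automorphisms, and managing this interplay degree by degree up to the conductor is the heart of the argument.
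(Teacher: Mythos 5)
Your proposal reconstructs the strategy of the paper --- the duality between one-forms $\omega=A\,\dx+B\,\dy$ and vector fields $E\equiv(\dot x=B,\ \dot y=-A)$, and the elimination of each term $a_jt^j$ with $j+n\in\Lambda$, $j\neq\lambda$, by the flow of the dual field --- and your first-order computation ($nt^{n-1}\dot y=Q(\phi)x'-P(\phi)y'$, so the orders of achievable infinitesimal deformations of $y$ are exactly $\{i:\ i+n\in\Lambda\}$) is correct and is indeed the idea the whole argument rests on. But the proof stops exactly where the mathematical work begins, and you say so yourself: ``this is exactly where the vector-flow technology of the paper does the work.'' What has to be proved is that the time-$s$ flow of a \emph{convergent} holomorphic field dual to a form of contact $j+n-1$ yields, after a Puiseux reparametrization restoring $x=v^n$, an expansion $v^m+\sum_{m<i<j}a_iv^i+(a_j+\beta s)v^j+v^{j+1}s\,y^{\prime}(v,s)$ with $\beta\neq 0$: linear motion of the $j$-th coefficient, \emph{no} motion of any lower coefficient, holomorphic control of everything above, and convergence of the flow for $|s|$ large enough to reach $s_j=-a_j/\beta$. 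In the paper this is the content of Lemmas \ref{lem:contact-transfer} through \ref{lem:reparametrization}: one compares $E$ with its tangential projection $\overline{E}$ along $f=0$ (see \eqref{eq:two-diffeqs}), proves by induction on the Taylor coefficients $X_i,Y_i$ of the flow --- via the Contact Transfer Lemma --- the key estimate \eqref{eq:order-error-solution-lemma}, and then performs two $n$-th-root reparametrizations, invoking the fact that the flow of a field singular at the origin converges on a fixed neighbourhood for arbitrarily large times. None of this can be waved through: ``flows realize first-order deformations without re-creating lower-order terms'' is precisely the assertion at stake, not a principle one may quote; it is the step Zariski himself was missing.

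There are two further gaps. First, your justification that the chosen $\omega$ is usable because ``it must have coefficients in the maximal ideal'' is both imprecise and insufficient: what the flow argument needs is $A\in(x,y)^2$ and $\ord_xB(x,0)\geq 2$, and this does not follow from membership in the maximal ideal. The paper derives it by a case analysis: for $j<\lambda$ one takes $\omega=dg$ with $g$ realizing $j\in S_{\Gamma}$, or $\omega=y^{p-1}dy$ when $j+n=pm$; for $j>\lambda$ the minimality of $j$, the definition of $\lambda$, and $n\nmid m$ force $a_{10}=a_{01}=b_{10}=0$ in the linear parts of $A$ and $B$. Second, for the uniqueness clause your homothety computation gives the ``if'' direction, but the ``only if'' direction is again argued only at the tangent-space level, which is the same infinitesimal-to-actual gap as above; note that the paper does not prove this half either --- it proves only the elimination (normal form existence) statement and attributes the uniqueness part to Zariski \cite{Zariski4}. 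So the honest summary is: you have the correct strategy, the correct first-order duality, and the correct finiteness bookkeeping ($i<c-n$), but the core lemmas that convert the infinitesimal picture into genuine analytic equivalences are missing, and they are the theorem.
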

The second part of the theorem (once the normal form has been computed) can be
said to have been known by Zariski (after a careful reading of
\cite{Zariski4}). The key result is, thus, the \emph{elimination} from a Puiseux
expansion of all the terms $a_i$ such that $i>\lambda$ and $i+n\in
\Lambda$. Notice that there are a finite number of nonzero $\tilde{a}_{i}$: if
$i\geq c$ then $\tilde{a}_i=0$.

The aim of this paper is to provide a geometric ---dynamic--- proof of this
elimination step using the fact that \emph{differential forms arise from
  differential equations}. This simple fact, overlooked by Zariski, provides a
natural and elementary argument which essentially solves the moduli problem.

\section{Contact Transfer. Taylor expansions.}
The main tool we shall use in the rest of the paper is the ``transfer of contact'' between two parametric plane vectors which are colinear up to some order. Two plane vector fields $u=(a,b)$, $v=(p,q)$ in $\mathbb{C}^2$ are colinear if and only if their determinant is zero:
\begin{equation*}
  \left|
    \begin{matrix}
      a & b\\
      p & q
    \end{matrix}
  \right| = aq - bp = 0.
\end{equation*}
When the components $a,b,p,q$ are functions of one parameter $t$, we can speak of \emph{colinearity to some order}:
\begin{definition}
  Let $a(t),b(t),p(t),q(t)$ be holomorphic functions at $0\in \mathbb{C}$. We say that $u(t)=(a(t),b(t))$ and $v(t)=(p(t),q(t))$ are \emph{colinear to order $j$} if
  \begin{equation*}
    a(t)q(t) - b(t)p(t) = \left|
      \begin{matrix}
        a(t) & b(t)\\
        p(t) & q(t)
      \end{matrix}
    \right| = t^jd(t)
  \end{equation*}
  for some holomorphic function $d(t)$ (notice that $d(0)$ may be $0$). The value $d(0)$ will be called the \emph{contact coefficient}.
\end{definition}

The importance of this concept comes from the following simple result, to which we shall refer as the ``Contact Transfer Lemma'': if two parametric vectors are colinear up to some order then they ``transfer'' their contact with linear forms (notice that the contact \emph{is not the same}, just related):

\begin{lemma}[Contact Transfer]\label{lem:contact-transfer}
  Let $u(t)=(a(t),b(t))$ and $v(t)=(p(t),q(t))$ be two parametric vectors whose components are holomorphic at $0\in \mathbb{C}$, colinear to order $j$, with $p(t)\neq 0$. Let $A(t)$ and $B(t)$ be holomorphic functions at $0\in\mathbb{C}$. Then
    \begin{equation*}
      A(t)a(t)+B(t)b(t) = \frac{a(t)}{p(t)}(A(t)p(t)+B(t)q(t)) +
      \frac{B(t)}{p(t)}t^j \overline{d}(t) 
    \end{equation*}
    for some function $\overline{d}(t)$ holomorphic at $0\in\mathbb{C}$.
\end{lemma}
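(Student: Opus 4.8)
The plan is to prove the identity by direct algebraic manipulation, using the colinearity hypothesis to control the "error" term. The colinearity-to-order-$j$ assumption says precisely that $a(t)q(t)-b(t)p(t)=t^jd(t)$ for some holomorphic $d(t)$. Since $p(t)\neq 0$, I may divide by $p(t)$ freely (as a meromorphic, or after clearing, holomorphic-up-to-the-stated-factors, operation), which is exactly why the right-hand side of the claimed identity carries denominators $p(t)$.

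First I would start from the desired left-hand side $A(t)a(t)+B(t)b(t)$ and try to rewrite $b(t)$ in terms of the quantity $A(t)p(t)+B(t)q(t)$ that appears on the right. The natural move is to multiply and divide by $p(t)$: write
\begin{equation*}
  A(t)a(t)+B(t)b(t) = \frac{a(t)}{p(t)}\bigl(A(t)p(t)+B(t)q(t)\bigr) + B(t)b(t) - \frac{a(t)B(t)q(t)}{p(t)}.
\end{equation*}
The first term is already the main term on the right-hand side, so everything reduces to simplifying the remaining two terms. Combining them over the common denominator $p(t)$ gives
\begin{equation*}
  B(t)b(t) - \frac{a(t)B(t)q(t)}{p(t)} = \frac{B(t)}{p(t)}\bigl(b(t)p(t) - a(t)q(t)\bigr).
\end{equation*}

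Now the colinearity hypothesis enters: $b(t)p(t)-a(t)q(t) = -\bigl(a(t)q(t)-b(t)p(t)\bigr) = -t^jd(t)$. Substituting, the leftover term becomes $\frac{B(t)}{p(t)}t^j\bigl(-d(t)\bigr)$, so setting $\overline{d}(t)=-d(t)$ yields exactly the claimed formula. Since $d(t)$ is holomorphic at $0$ by hypothesis, so is $\overline{d}(t)$, and the proof is complete.

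I do not expect any genuine obstacle here: the statement is an exact algebraic identity and the only analytic input is that $d(t)$ is holomorphic (already guaranteed) and that division by $p(t)$ is permissible (guaranteed by $p(t)\neq 0$, which I read as $p$ not identically zero, so that $a/p$ and $B/p$ are at worst meromorphic but the combination reproduces the stated form). The one point requiring a moment's care is bookkeeping the sign in passing from $a(t)q(t)-b(t)p(t)$ to $b(t)p(t)-a(t)q(t)$, which flips the sign of $d$; absorbing this into $\overline{d}(t)=-d(t)$ is harmless since the lemma only asserts existence of some holomorphic $\overline{d}$.
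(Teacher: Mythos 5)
Your proof is correct and is essentially the paper's own argument: both amount to the same one-line algebraic manipulation, using the colinearity identity $a(t)q(t)-b(t)p(t)=t^jd(t)$ to rewrite the error term after factoring out $\frac{a(t)}{p(t)}(A(t)p(t)+B(t)q(t))$ (the paper solves for $b(t)$ and substitutes, you add and subtract the main term — the computations are identical up to reorganization, including the sign absorbed into $\overline{d}(t)=-d(t)$).
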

\begin{proof}
  By definition, as $p(t)\neq 0$:
  \begin{equation*}
    b(t) = \frac{a(t)}{p(t)}q(t)-\frac{1}{p(t)}t^jd(t)
  \end{equation*}
  for some $d(t)$ holomorphic at $0$. By direct substitution:
  \begin{equation*}
    A(t)a(t)+B(t)b(t) = a(t)A(t)
    + B(t)\left(\frac{a(t)}{p(t)}q(t)
    - \frac{1}{p(t)}t^j d(t)\right)
\end{equation*}
taking common factor and distributing the parenthesis, we obtain
\begin{equation*}
  A(t)a(t)+B(t)b(t) = \frac{a(t)}{p(t)}\left( A(t)p(t) + B(t)q(t) \right) -
  \frac{B(t)}{p(t)}t^j d(t) 
\end{equation*}
as desired.
\end{proof}
We shall repeatedly apply the Contact Transfer Lemma when $(a(t),b(t))$ is colinear to order $j+n-1$ to the tangent vector of $\Gamma=(t^n, t^m + \hot)$, say $(\dot{x}(t),\dot{y}(t))$, to obtain, for some specific holomorphic functions $F_i(x,y)$
\begin{equation*}
  \begin{split}
  \frac{\partial F_i}{\partial x}(x(t),&y(t))a(t) + \frac{\partial F_{i}}{\partial y}(x(t),y(t))b(t) =\\
  \frac{a(t)}{\dot{x}(t)}&\left(
    \frac{\partial F_{i}}{\partial x}(x(t),y(t))\dot{x}(t) + \frac{\partial F_{i}}{\partial y}(x(t),y(t))\dot{y}(t)
  \right) + \frac{\partial F_{i}}{\partial y}t^jd(t)=\\
  &\frac{a(t)}{\dot{x}(t)}\frac{dF_{i}(x(t),y(t))}{dt}+
  \frac{\partial F_{i}}{\partial y}t^jd(t)
\end{split}
\end{equation*}
inside a Taylor expansion depending on a parameter $t$. As the reader will have noticed, we set $A(t)=(\partial F_i/\partial x)(x(t),y(t))$ and $B(t)=(\partial F_i/\partial y)(x(t),y(t))$. However, to be precise, we need to introduce the required notation. Consider the ODE
\begin{equation}
  E \equiv
  \label{eq:ode-1}
      \left\{
      \begin{array}{l}
        \dot{x} = X_1(x,y)\\
        \dot{y} = Y_1(x,y)
      \end{array}
    \right.
\end{equation}
where $X_1(x,y)$ and $Y_1(x,y)$ are holomorphic functions defined in an open set $(0,0)\in U\subset \mathbb{C}^2$. Let $(x_0,y_0)\in U$ be an initial condition. A solution of \eqref{eq:ode-1} with initial condition $(x_0,y_0)$ is a holomorphic function $s\mapsto (x(s),y(s))$ defined in a neighbourhood $V$ of $0\in \mathbb{C}$ such that $(x(0),y(0))=(x_0,y_0)$ and for any $s\in V$,
\begin{equation}
  \label{eq:ode-solution-1}
      \left\{
      \begin{array}{l}
        \dot{x}(s) = X_1(x(s),y(s))\\
        \dot{y}(s) = Y_1(x(s),y(s))
      \end{array}
    \right.
\end{equation}
Assume $X_k$ and $Y_k$ have been defined for all $k \leq i$. Set, inductively,
\begin{equation}
  \label{eq:ode-ith-term}
  \begin{split}
    X_{i+1}(x,y) &:= \frac{\partial X_{i}(x,y)}{\partial x}X_1(x,y) + \frac{\partial X_i(x,y)}{\partial y}Y_1(x,y).\\
    Y_{i+1}(x,y) &:= \frac{\partial Y_{i}(x,y)}{\partial x}X_1(x,y) + \frac{\partial Y_i(x,y)}{\partial y}Y_1(x,y).    
    \end{split}
\end{equation}
By the chain rule, a solution $(x(s),y(s))$ as above satisfies, by \eqref{eq:ode-1} and \eqref{eq:ode-solution-1},
\begin{equation}
  \label{eq:chain-rule-1}
  \frac{d^ix}{ds^i}(0) = X_i(x_0,y_0), \,\,\,\frac{d^iy}{ds^i}(s) = Y_i(x_0,y_0).
\end{equation}
for all $i\geq 1$. Thus, by Taylor's Theorem, $x(s)$ and $y(s)$ have the following convergent expansions:
\begin{equation}
  \label{eq:taylor-expansion-solution-1}
  \begin{split}
  x(s) &= x_0 + X_1(x_0,y_0)s + \sum_{i>1} X_i(x_0,y_0) \frac{s^i}{i!}\\
  y(s) &= y_0 + Y_1(x_0,y_0)s + \sum_{i>1} Y_i(x_0,y_0) \frac{s^i}{i!}.
\end{split}
\end{equation}

In this paper we are concerned with a germ of analytic curve and a differential equation which ``almost'' leaves it invariant. Consider an analytic curve $\Gamma\equiv (x_0(t),y_0(t))$ with $x_0(0)=y_0(0)=0$, of the form $x_0(t)=t^n$, $y_0(t)=t^m +\hot$ for $m>n>0$ (the rest of the expansion is irrelevant for now; here and in the rest of the paper, $\hot$ means ``terms of order higher than the previous one'').  We use the indices $x_0(t)$, $y_0(t)$ to emphasize that we shall consider the points of $\Gamma$ as (parametric) initial conditions. Let $f(x,y)$ be an irreducible holomorphic function at $(0,0)$ such that $f(x_0(t),y_0(t))=0$. From this equality, we get, by differentiation,
\begin{equation}
  \label{eq:f-equals-0}
  f_x(x_0(t),y_0(t))\dot{x}_0(t) + f_{{y}}(x_0(t),y_0(t))\dot{y}_0(t) = 0
\end{equation}
where $f_x$ and $f_y$ denote the $x$ and $y$ partial derivatives. The
irreducibility of $f(x,y)$ implies that $f_y(x_0(t),y_0(t))\neq 0$ for
$t\neq 0$. This gives
\begin{equation}
  \label{eq:quotient-of-partials-of-f}
  \frac{f_x(x_0(t),y_0(t))}{f_y(x_0(t),y_0(t))} =
  -\frac{\dot{y}_0(t)}{\dot{x}_0(t)}
\end{equation}
(which makes sense because $x_0(t)=t^n\neq 0$ for $n>0$).

Consider the differential equations
  \begin{equation}
    \label{eq:two-diffeqs}
    \begin{array}{ll}
    E\equiv \left\{
      \begin{array}{l}
        \dot{x} = X_{1}(x,y)\\
        \dot{y} = Y_{1}(x,y)
      \end{array}
    \right.
    &
    \overline{E}\equiv
    \left\{
      \begin{array}{l}
        \dot{x} = f_{{y}}\left(X_1/f_{{y}}\right)(x,y) = \overline{X}_1(x,y) =X_1(x,y)\\
        \dot{y} = -f_x \left(X_1/f_{{y}}\right)(x,y) = \overline{Y}_1(x,y)
      \end{array}      
    \right.
    \end{array}
  \end{equation}
  and let $(t,s)\mapsto (x(t,s),y(t,s))$ and $(t,s)\mapsto (\overline{x}(t,s),\overline{y}(t,s))$ be their respective solutions for the initial conditions $(x_0(t),y_0(t))$. All the functions $x(t,s), y(t,s), \overline{x}(t,s)$ and $\overline{y}(t,s)$ are holomorphic and defined in a neighbourhood of $(0,0)\in \mathbb{C}^{2}$, by the analytic dependence of the solutions of an ODE on the initial conditions (Theorem 1.1 of  \cite{ilyashenko-yakovenko-lectures}). Notice that, by definition of differential equation, the points $(\overline{x}(t,s),\overline{y}(t,s))$ are all in $\Gamma$ (i.e. $\Gamma$ is invariant by $\overline{E}$), as $\overline{E}$ represents a vector field tangent to the set $\Gamma\equiv f(x,y)=0$ at all the points of $\Gamma$.

  \textbf{Further Assumptions.} From now on, we assume that $Y_{1}(x,y)\in(x,y)^2$ and $\ord_xX_{1}(x,0)\geq 2$. We also impose that, if $a(t)=X_1(x_0(t),y_0(t))$ and $b(t)=Y_1(x_0(t),y_0(t))$, then $(a(t),b(t))$ and $(\dot{x}_0(t),\dot{y}_0(t))$ are colinear to order $j+n-1$ for some $j>m$, \emph{with nonzero contact coefficient}.

The last condition is exactly what will imply that the flow associated to $E$ leaves $\Gamma$ ``almost'' invariant (to order $j$), affecting the $j-$th component of the $y$ coordinate \emph{linearly}. This is the content of the next result.
  
  \begin{lemma}\label{lem:solutions-are-approximate}
 With the notations and hypotheses of this section, there exist holomorphic functions $\tilde{x}(t,s)$ and $\tilde{y}(t,s)$ in a neighbourhood of $(0,0)\in \mathbb{C}^{2}$ such that
  \begin{equation}
    \label{eq:order-error-solution-lemma}
    x(t,s) = \overline{x}(t,s) + t^js^{2}\tilde{x}(t,s),\,\,\,
    y(t,s) = \overline{y}(t,s) + \alpha t^js + t^{j+1}s\tilde{y}(t,s)
  \end{equation}
  for some nonzero $\alpha\in \mathbb{C}$.
\end{lemma}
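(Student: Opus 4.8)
The plan is to expand both flows in powers of $s$ by \eqref{eq:taylor-expansion-solution-1} and to compare them coefficient by coefficient. Writing $X_i(\Gamma),Y_i(\Gamma)$ for $X_i(x_0(t),y_0(t)),Y_i(x_0(t),y_0(t))$ and setting $\delta X_i:=X_i-\overline X_i$, $\delta Y_i:=Y_i-\overline Y_i$, the $s^i$-coefficients of $x-\overline x$ and $y-\overline y$ are $\delta X_i(\Gamma)/i!$ and $\delta Y_i(\Gamma)/i!$. Since $\overline X_1=X_1$ by \eqref{eq:two-diffeqs} we have $\delta X_1\equiv 0$, so \eqref{eq:order-error-solution-lemma} reduces to three order estimates: $\ord_t\delta X_i(\Gamma)\ge j$ for $i\ge2$; $\delta Y_1(\Gamma)=\alpha t^j+\hot$ with $\alpha\ne0$; and $\ord_t\delta Y_i(\Gamma)\ge j+1$ for $i\ge2$. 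Granting these, each $\delta X_i(\Gamma)$ ($i\ge 2$) is divisible by $t^j$ and each $\delta Y_i(\Gamma)$ ($i\ge2$) by $t^{j+1}$, so dividing the $s$-series of $x-\overline x$ by $t^js^2$ and of $y-\overline y-\alpha t^js$ by $t^{j+1}s$ produces the holomorphic $\tilde x,\tilde y$ of the statement (the quotients are holomorphic by the usual division in the ring of convergent power series, since the numerators vanish to the required order in each variable).

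The first task is to identify $\alpha$ and the $t$-order of $W:=Y_1-\overline Y_1$ along $\Gamma$; this is where the colinearity hypothesis enters. By \eqref{eq:two-diffeqs} and \eqref{eq:quotient-of-partials-of-f}, $\overline Y_1(\Gamma)=-\tfrac{f_x(\Gamma)}{f_y(\Gamma)}X_1(\Gamma)=\tfrac{\dot y_0}{\dot x_0}X_1(\Gamma)$, so, writing $u:=X_1(\Gamma)/\dot x_0$, one has $X_1(\Gamma)=u\dot x_0$ and $\overline Y_1(\Gamma)=u\dot y_0$; that is, $\overline E$ is genuinely tangent to $\Gamma$. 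Hence $\delta Y_1(\Gamma)=Y_1(\Gamma)-u\dot y_0=\big(Y_1(\Gamma)\dot x_0-X_1(\Gamma)\dot y_0\big)/\dot x_0$, whose numerator is, up to sign, the colinearity determinant $t^{j+n-1}d(t)$ with $d(0)\ne0$. As $\dot x_0=nt^{n-1}$ this gives $\delta Y_1(\Gamma)=-t^jd(t)/n=\alpha t^j+\hot$ with $\alpha=-d(0)/n\ne0$, exactly the nonzero constant of the statement. The same data show $\ord_tu\ge2$: the assumption $\ord_xX_1(x,0)\ge2$ together with $m>n$ force $\ord_tX_1(\Gamma)\ge\min(2n,m)\ge n+1$, so $\ord_tu=\ord_tX_1(\Gamma)-(n-1)\ge2$.

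For $i\ge2$ a direct induction on \eqref{eq:ode-ith-term} is awkward, because the $y$-derivatives of $\delta X_i,\delta Y_i$ keep re-entering. Instead I use the operators $\mathcal L:=X_1\partial_x+Y_1\partial_y$ and $\overline{\mathcal L}:=X_1\partial_x+\overline Y_1\partial_y$, for which $X_i=\mathcal L^ix$, $Y_i=\mathcal L^iy$ and likewise with bars, and $\mathcal L-\overline{\mathcal L}=W\partial_y$. The tangency computed above says exactly that, upon restriction to $\Gamma$, $\overline{\mathcal L}$ acts as the single-variable derivation $\mathcal D:=u\,\tfrac{d}{dt}$ (this is Lemma~\ref{lem:contact-transfer} with vanishing remainder, or the fact that $\overline z(t,s)=\Gamma(\tau(t,s))$ for the reparametrisation $\partial_s\tau=u(\tau)$), so that $(\overline{\mathcal L}^{p}H)(\Gamma)=\mathcal D^{p}\big[H(\Gamma)\big]$ for every $p$. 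Telescoping $\mathcal L^i-\overline{\mathcal L}^i=\sum_{k=0}^{i-1}\overline{\mathcal L}^{\,i-1-k}(W\partial_y)\mathcal L^{k}$ and applying it to $x$ and to $y$ yields, after restriction,
\[
  \delta X_i(\Gamma)=\sum_{k=0}^{i-1}\mathcal D^{\,i-1-k}\!\big[W_\Gamma\,(\partial_yX_k)(\Gamma)\big],\qquad
  \delta Y_i(\Gamma)=\sum_{k=0}^{i-1}\mathcal D^{\,i-1-k}\!\big[W_\Gamma\,(\partial_yY_k)(\Gamma)\big],
\]
where $W_\Gamma:=W(\Gamma)=\delta Y_1(\Gamma)$ has $\ord_t W_\Gamma=j$.

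The estimates now follow by order bookkeeping, using two facts: $\mathcal D=u\,\tfrac{d}{dt}$ raises $t$-order by at least $1$ (since $\ord_tu\ge2$), and $Y_k\in(x,y)^2$ for every $k\ge1$ (an immediate induction from $Y_1\in(x,y)^2$ and $X_1\in(x^2,y)$ via \eqref{eq:ode-ith-term}), whence $\ord_t(\partial_yY_k)(\Gamma)\ge n\ge1$. For $\delta X_i$ each summand already carries the factor $W_\Gamma$, giving $\ord_t\delta X_i(\Gamma)\ge j$. For $\delta Y_i$ with $i\ge2$: every summand with $k\le i-2$ absorbs at least one $\mathcal D$, hence has order $\ge j+1$; the single exceptional summand $k=i-1$ carries no $\mathcal D$, but there $(\partial_yY_{i-1})(\Gamma)$ vanishes at $t=0$ (as $i-1\ge1$), so its order is $\ge j+n\ge j+1$ as well. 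This exceptional term—the one with no derivative to spare—is the only real obstacle, and it is defeated precisely by the hypothesis $Y_1\in(x,y)^2$. A minor technical wrinkle is that $\overline Y_1=-f_xX_1/f_y$, and hence the intermediate functions $\overline{\mathcal L}^pH$, may have poles off $\Gamma$; this is harmless, since the displayed identities are equalities of functions holomorphic at $t=0$ which one checks first for $t\ne0$ (where $f_y(\Gamma)\ne0$) and then extends. Assembling the three estimates into the $s$-expansions gives \eqref{eq:order-error-solution-lemma}.
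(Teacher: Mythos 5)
Your proof is correct, and it reaches the estimates by a genuinely different mechanism than the paper. The paper proves the same three coefficient bounds (it phrases them as $X_i(\Gamma)=\overline X_i(\Gamma)+t^j\tilde x_i$, $Y_i(\Gamma)=\overline Y_i(\Gamma)+t^j\tilde y_i$ with $\tilde y_1(0)\neq 0$, $\tilde y_i(0)=0$ for $i>1$) by an induction on $i$: at each step it applies the Contact Transfer Lemma (Lemma \ref{lem:contact-transfer}) to rewrite $X_{i+1}(\Gamma)$ and $Y_{i+1}(\Gamma)$ as $\frac{X_1(\Gamma)}{\dot x_0}\frac{d}{dt}\big[X_i(\Gamma)\big]$ (resp.\ $Y_i$) plus a remainder of order $j$, substitutes the inductive expression for $X_i(\Gamma)$, differentiates the $t^j$-error (which costs one order but regains it because $X_1(\Gamma)/\dot x_0$ has order $\geq 2$), and re-identifies the principal part as $\overline X_{i+1}(\Gamma)$. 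You instead avoid any induction on the error: the telescoping identity $\mathcal L^i-\overline{\mathcal L}^i=\sum_{k}\overline{\mathcal L}^{\,i-1-k}(W\partial_y)\mathcal L^{k}$, together with the observation that $\overline{\mathcal L}$ restricted to $\Gamma$ is the one-variable derivation $\mathcal D=u\,d/dt$ (which is indeed the Contact Transfer Lemma with zero remainder, i.e.\ exact tangency of $\overline E$ to $\Gamma$), gives a closed-form expression for $\delta X_i(\Gamma)$, $\delta Y_i(\Gamma)$ in which every summand visibly carries one factor $W_\Gamma$ of order $j$. This buys transparency: the roles of the three hypotheses are cleanly separated (colinearity gives $\ord_t W_\Gamma=j$ and $\alpha\neq 0$; $\ord_x X_1(x,0)\geq 2$ makes $\mathcal D$ raise order; $Y_1\in(x,y)^2$ kills the one summand, $k=i-1$, that no $\mathcal D$ reaches), whereas in the paper these facts are interleaved inside the induction and the propagation of the $\kappa t^j+\hot$ errors is harder to track. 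Your treatment of the two technical points the paper glosses over --- the possible poles of $\overline Y_1=-f_xX_1/f_y$ off $\Gamma$, and the passage from coefficientwise divisibility by $t^j$ (resp.\ $t^{j+1}$) to holomorphy of $\tilde x$, $\tilde y$ --- is adequate: the identities are checked for $t\neq 0$ and extend by removability, and the division argument is standard. Both proofs compute the same base case and the same constant $\alpha=-d(0)/n$, so the approaches agree where it matters and differ only, but substantially, in how the higher coefficients are controlled.
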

\begin{proof}
  Rewrite \eqref{eq:taylor-expansion-solution-1} for all the points
  $(x_0(t),y_0(t))\in \Gamma$ in a sufficiently small neighbourhood of $(0,0)$ as
  \begin{equation}
  \label{eq:taylor-expansion-solution-2}
  \begin{split}
    x(t,s) &= x_0(t) + X_1(x_0(t),y_0(t))s +
    \sum_{i>1} X_i(x_0(t),y_0(t)) \frac{s^i}{i!}\\
    y(t,s) &= y_0(t) + Y_1(x_0(t),y_0(t)) s +
    \sum_{i>1} Y_i(x_0(t),y_0(t)) \frac{s^i}{i!}.
\end{split}
\end{equation}
and the corresponding equalities for $\overline{x}(t,s), \overline{y}(t,s)$ and $\overline{X}_i, \overline{Y}_i$. These equalities hold (and the series are convergent) by the analytic dependence of the solutions of an ODE on the initial conditions. We also have the equivalents of \eqref{eq:chain-rule-1} for $(x(t,s),y(t,s))$:
\begin{equation}
  \label{eq:chain-rule-2}
  \frac{\partial^ix}{\partial s^i}(t,0) = X_i(x_0(t),y_0(t)),
  \,\,\,\frac{\partial^iy}{\partial s^i}(t,0) = Y_i(x_0(t),y_0(t)).
\end{equation}
and for $\overline{x}(t,s), \overline{y}(t,s)$:
\begin{equation}
  \label{eq:chain-rule-2}
  \frac{\partial^i\overline{x}}{\partial s^i}(t,s) =
  \overline{X}_i(x_0(t),y_0(t)),
  \,\,\,\frac{\partial^i\overline{y}}{\partial s^i}(t,s) =
  \overline{Y}_i(x_0(t),y_0(t)).
\end{equation}
Before proceeding any further, notice that $Y_1(x,y)\in(x,y)^2$ by hypothesis. Assume that $Y_k(x,y)\in(x,y)^{2}$ for $k\leq i$. Because $X_1(x,y),Y_1(x,y)\in (x,y)$ and
\begin{equation*}
Y_{i+1}= \frac{\partial Y_i}{\partial x}X_1(x,y) + \frac{\partial Y_i}{\partial y}Y_1(x,y),
\end{equation*}
we conclude that $Y_{i+1}(x,y)\in(x,y)^2$ for all $i$.
  
Using \eqref{eq:taylor-expansion-solution-2}, the lemma is proved if we show that
\begin{equation*}
  \begin{split}
  X_i(x_0(t),y_0(t)) &= \overline{X}_i(x_0(t),y_0(t)) + t^j\tilde{x}_i(t),\\
  Y_i(x_0(t),y_0(t)) &= \overline{Y}_i(x_0(t),y_0(t)) + t^{j}\tilde{y}_i(t)
\end{split}
\end{equation*}
for $\tilde{x}_i(t),\tilde{y}_i(t)$ holomorphic at $0\in \mathbb{C}$, with $\tilde{x}_1(t)=0$,  $\tilde{y}_1(0)\neq 0, \tilde{y}_i(0)=0$ for $i>1$. This is true certainly for $i=1$: on one hand, $X_1=\overline{X}_1$; on the other, by \eqref{eq:two-diffeqs} we have:
\begin{equation*}
  \overline{Y}_1(x_0(t),y_0(t)) = -
  \frac{f_x(x_0(t),y_0(t))}{f_y(x_0(t),y_0(t))}X_1(x_0(t),y_0(t))
  = \frac{\dot{y}_0(t)}{\dot{x}_0(t)}X_{1}(x_0(t),y_0(t)),
\end{equation*}
and by the hypothesis on the colinearity of $(X_1(x_0(t),y_0(t)),Y_1(x_0(t),y_0(t)))$ and $(\dot{x}_0(t),\dot{y}_0(t))$ to order $j+n-1$, we know that:
\begin{equation*}
  \dot{y}_0(t)X_1(x_0(t),y_0(t)) - \dot{x}_0(t)Y_1(x_0(t),y_0(t)) = t^{j+n-1}d(t)
\end{equation*}
with $\alpha:=d(0)\neq 0$. Now,  by substitution:
\begin{equation*}
  \overline{Y}_1(x_0(t),y_0(t)) = 
  \frac{\dot{y}_0(t)}{\dot{x}_0(t)}X_{1}(x_0(t),y_0(t)) = Y_1(x_0(t),y_0(t)) + \frac{1}{n} t^j d(t)
\end{equation*}
with $d(0)\neq 0$, as desired.

Assume the results true for $k\leq i$ and consider the case $i+1$. By the Contact Transfer Lemma (Equality (1)) and the chain rule (Equality (2)), there is some $\kappa\in \mathbb{C}$ such that
\begin{equation*}
  \begin{split}
X_{i+1}(x_0(t),y_0(t)):=\frac{\partial X_i}{\partial x}X_1(x_0(t),y_0(t)) +
\frac{\partial X_i}{\partial y}Y_1(x_0(t),y_0(t)) \stackrel{(1)}{=}\\ \frac{X_1(x_0(t),y_0(t))}{\dot{x}_0(t)}\left( \frac{\partial X_i}{\partial x} \dot{x}_0(t) + \frac{\partial X_i}{\partial y}\dot{y}_0(t) \right) +
\frac{\partial X_i}{\partial y}(x_0(t),y_0(t))(\kappa t^j + \hot) \stackrel{(2)}{=}\\
\frac{X_1(x_0(t),y_0(t))}{\dot{x}_0(t)} \frac{dX_i(x_0(t),y_0(t))}{dt} +
\frac{\partial X_i}{\partial y}(x_0(t),y_0(t))  (\kappa t^j + \hot) = \star.
\end{split}
\end{equation*}
By the induction hypothesis, $X_i(x_0(t),y_0(t))=\overline{X}_i(x_0(t),y_0(t))+\overline{\kappa}t^j$ for some other $\overline{\kappa}\in \mathbb{C}$, hence
\begin{equation*}
  \begin{split}
  \star = \frac{X_1(x_0(t),y_0(t))}{\dot{x}_0(t)} \frac{d(\overline{X}_i(x_0(t),y_0(t)) + \overline{\kappa}t^j + \hot)}{dt} +\\
  \frac{\partial X_i}{\partial y}(x_0(t),y_{0}(t))(\kappa t^j + \hot)
\end{split}
\end{equation*}
and rewriting the derivative with respect to $t$ using the chain rule, we get
\begin{equation*}
  \begin{split}
  \star = \frac{X_1(x_0(t),y_0(t))}{\dot{x}_0(t)}\left(
    \frac{\partial \overline{X}_i}{\partial x}\dot{x}_0(t) +
    \frac{\partial \overline{X}_i}{\partial y}\dot{y}_0(t)
    + j\overline{\kappa }t^{j-1}+\hot\right) +\\
  \frac{\partial X_i}{\partial y}(x_0(t),y_0(t))(\kappa t^j + \hot).
\end{split}
\end{equation*}
The conditions $m>n$ and $\ord_x(X_1(x,0))\geq 2$ imply that $\ord_t(X_1(x_0(t),y_0(t)))>n-1$. Distributing the parenthesis and simplifying, using that $X_1=\overline{X}_1$, $\overline{Y}_1=-f_x\overline{X}_1/f_y$ and \eqref{eq:quotient-of-partials-of-f}, we conclude that:
\begin{equation*}
  \star = \frac{\partial \overline{X}_i}{\partial x}
  \overline{X}_1(x_0(t),y_0(t)) + \frac{\partial \overline{X}_i}{\partial y}
  \overline{Y}_1(x_0(t),y_0(t)) + t^j\tilde{x}_i(t)
\end{equation*}
for some holomorphic function $\tilde{x}_i(t)$ at $0$. This proves the result for $x(t,s)$.

For $y(t,s)$, the Contact Transfer Lemma and the chain rule give again:
\begin{equation*}
  \begin{split}
Y_{i+1}(x_0(t),y_0(t)):=\frac{\partial Y_i}{\partial x}X_1(x_0(t),y_0(t)) +
\frac{\partial Y_i}{\partial y}Y_1(x_0(t),y_0(t)) =\\ \frac{X_1(x_0(t),y_0(t))}{\dot{x}_0(t)}\left( \frac{\partial Y_i}{\partial x} \dot{x}_0(t) + \frac{\partial Y_i}{\partial y}\dot{y}_0(t) \right) +
\frac{\partial Y_i}{\partial y}(x_0(t),y_0(t)) (\kappa t^j + \hot) =\\
\frac{X_1(x_0(t),y_0(t))}{\dot{x}_0(t)} \frac{dY_i(x_0(t),y_0(t))}{dt} +
\frac{\partial Y_i}{\partial y}(x_0(t),y_{0}(t))(\kappa t^j + \hot) = \spadesuit,
\end{split}
\end{equation*}
for some $\kappa \in \mathbb{C}$.
Using the induction hypothesis, we get
\begin{equation*}
  \begin{split}
  \spadesuit = \frac{X_1(x_0(t),y_0(t))}{\dot{x}_0(t)} \frac{d(\overline{Y}_i(x_0(t),y_0(t)) + \overline{\kappa} t^{j} + \hot)}{dt} +\\
  \frac{\partial Y_i}{\partial y}(x_0(t),y_0(t)) (\kappa t^j + \hot)
\end{split}
\end{equation*}
for some $\overline{\kappa }\in \mathbb{C}$. Computing the derivative with respect to $t$ using the chain rule:
\begin{equation*}
  \begin{split}
  \spadesuit = \frac{X_1(x_0(t),y_0(t))}{\dot{x}_0(t)}\left(
    \frac{\partial \overline{Y}_i}{\partial x}\dot{x}_0(t) +
    \frac{\partial \overline{Y}_i}{\partial y}\dot{y}_0(t)
    + \overline{\kappa }t^{j-1}+\hot\right) +\\
  \frac{\partial Y_i}{\partial y}(x_0(t),y_0(t)) (\kappa t^j + \hot)
\end{split}
\end{equation*}
which, as $X_1=\overline{X}_1$, $\ord_x(X_1(x,0))\geq 2$, $Y_i\in(x,y)^2$ and $\overline{Y}_1 = -f_yX_1/ f_x$, implies
\begin{equation*}
  \spadesuit = \frac{\partial \overline{Y}_i}
  {\partial x}\overline{X}_1(x_0(t),y_0(t)) +
  \frac{\partial \overline{Y}_i}{\partial y}\overline{Y}_1
  (x_0(t),y_0(t))+ \tilde{\kappa }t^{j+1} + \hot
\end{equation*}
for some $\tilde{\kappa }\in \mathbb{C}$, as desired.
\end{proof}
We shall also need the following result:
\begin{lemma}\label{lem:solutions-order-n+1}
  With the hypothesis of the previous lemma,
  \begin{equation*}
    \ord_{t}(X_i(x_0(t),y_0(t))), \ord_t(\overline{X}_i(x_0(t),y_0(t))) > n.
  \end{equation*}
\end{lemma}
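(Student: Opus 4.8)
The plan is to prove both estimates by induction on $i$, running the same Contact-Transfer-plus-chain-rule machinery already used in the proof of Lemma~\ref{lem:solutions-are-approximate}, but now tracking only the $t$-order along $\Gamma$. Write $\mu_i=\ord_t X_i(x_0(t),y_0(t))$ and $\overline{\mu}_i=\ord_t \overline{X}_i(x_0(t),y_0(t))$. For the base case $i=1$, recall that $X_1\in(x,y)$ has no constant term and, by $\ord_xX_1(x,0)\geq 2$, no linear term in $x$ alone; hence every monomial $x^ay^b$ occurring in $X_1$ satisfies either $b\geq 1$ (contributing order $\geq m$ along $\Gamma$) or $a\geq 2,b=0$ (contributing order $\geq 2n$). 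Since $x_0(t)=t^n$ exactly and $y_0(t)$ has order $m>n$, no monomial can reach order $n$, so $\mu_1\geq\min(2n,m)>n$. As $\overline{X}_1=X_1$, we also get $\overline{\mu}_1=\mu_1>n$.

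For the inductive step on $X_i$, I would reuse the identity obtained from the Contact Transfer Lemma and the chain rule inside the proof of Lemma~\ref{lem:solutions-are-approximate}, namely
\begin{equation*}
  X_{i+1}(x_0(t),y_0(t)) = \frac{X_1(x_0(t),y_0(t))}{\dot{x}_0(t)}\frac{d}{dt}X_i(x_0(t),y_0(t)) + \frac{\partial X_i}{\partial y}(x_0(t),y_0(t))\,(\kappa t^j+\hot).
\end{equation*}
The first summand is a product of $X_1(x_0(t),y_0(t))/\dot{x}_0(t)$, which has order $\mu_1-(n-1)\geq 2$, and of $\tfrac{d}{dt}X_i(x_0(t),y_0(t))$, which has order $\mu_i-1$; so its order is at least $\mu_i+1$. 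The second summand has order at least $j>m>n$, because $\partial X_i/\partial y$ evaluated on $\Gamma$ is merely holomorphic (order $\geq 0$) and is multiplied by $t^j$. Hence $\mu_{i+1}\geq\min(\mu_i+1,j)>n$, closing the induction.

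For $\overline{X}_i$ the argument is even shorter, and this is where the invariance of $\Gamma$ under $\overline{E}$ enters: since $\overline{E}$ is tangent to $\Gamma$, the determinant $\overline{X}_1(x_0,y_0)\dot{y}_0-\overline{Y}_1(x_0,y_0)\dot{x}_0$ vanishes identically. Indeed it equals $X_1(x_0,y_0)\big(\dot{y}_0+(f_x/f_y)\dot{x}_0\big)$, and $f_x\dot{x}_0+f_y\dot{y}_0=0$ because $f(x_0(t),y_0(t))\equiv 0$. Thus $(\overline{X}_1,\overline{Y}_1)$ is \emph{exactly} colinear to $(\dot{x}_0,\dot{y}_0)$ on $\Gamma$, the error term in the Contact Transfer Lemma is zero, and
\begin{equation*}
  \overline{X}_{i+1}(x_0(t),y_0(t)) = \frac{X_1(x_0(t),y_0(t))}{\dot{x}_0(t)}\frac{d}{dt}\overline{X}_i(x_0(t),y_0(t)),
\end{equation*}
whose order is at least $\overline{\mu}_i+1$. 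Together with $\overline{\mu}_1>n$ this gives $\overline{\mu}_i>n$ for all $i$.

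The only genuinely delicate points are the order bookkeeping: one must check that $d/dt$ lowers the $t$-order by exactly one while the factor $X_1(x_0(t),y_0(t))/\dot{x}_0(t)$ raises it by at least two, so that the ``derivative'' summand strictly improves the order at each step, and that the exceptional ``contact'' summand sits at order $\geq j>n$ regardless of how low the order of $\partial X_i/\partial y$ on $\Gamma$ may be. I expect the main obstacle to be expository rather than mathematical—packaging the two parallel inductions so as not to re-derive the Contact-Transfer identity already established in Lemma~\ref{lem:solutions-are-approximate}, and isolating the observation that tangency of $\overline{E}$ to $\Gamma$ is precisely what removes the error term in the barred case—since the recursion forces the order strictly above $n$ at every stage.
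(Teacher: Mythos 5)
Your argument is correct --- the base case, the order bookkeeping in the inductive step, and the observation that exact tangency of $\overline{E}$ to $\Gamma$ kills the Contact Transfer error term in the barred case are all sound --- but it takes a genuinely heavier route than the paper, whose proof never invokes Lemma \ref{lem:contact-transfer} at all. The paper inducts directly on the defining recursion \eqref{eq:ode-ith-term}: along $\Gamma$ one has $\ord_t X_1(x_0(t),y_0(t))\geq\min(2n,m)>n$ (exactly your base-case count) and $\ord_t Y_1(x_0(t),y_0(t))\geq 2n>n$ (since $Y_1\in(x,y)^2$ and both coordinates have order $\geq n$), while $\partial X_i/\partial x$ and $\partial X_i/\partial y$ evaluated on $\Gamma$ are merely holomorphic, of order $\geq 0$; hence
\begin{equation*}
  \ord_t X_{i+1}(x_0(t),y_0(t)) \;\geq\; \min\left(\ord_t X_1(x_0(t),y_0(t)),\ \ord_t Y_1(x_0(t),y_0(t))\right) \;>\; n,
\end{equation*}
and the same count works verbatim for $\overline{X}_{i+1}$, using $\overline{X}_1=X_1$ and the fact that on $\Gamma$ one has $\overline{Y}_1(x_0(t),y_0(t))=(\dot{y}_0(t)/\dot{x}_0(t))X_1(x_0(t),y_0(t))$, of order $>m>n$. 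Note that this step uses neither the inductive hypothesis on $X_i$ (only the holomorphy of its partial derivatives), nor the colinearity assumption, the condition $j>m$, or the nonvanishing of the contact coefficient --- all of which your route consumes through the Contact Transfer identity. What your version buys in exchange is finer, but here unneeded, information: the strict growth of $\ord_t X_{i+1}(x_0(t),y_0(t))$ by at least one at each step away from the order-$j$ error term, and the exact-invariance mechanism in the barred case; both are true, but they essentially re-derive pieces of the proof of Lemma \ref{lem:solutions-are-approximate} where a two-line order count suffices.
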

\begin{proof}
  By hypothesis, the result is true for $i=1$ as $\ord_x(X_1(x,0))\geq 2$ and
  $m>n>0$. By definition,
  \begin{equation*}
    X_{i+1} = \frac{\partial X_{i}}{\partial x}X_1(x,y) + \frac{\partial X_i}{\partial y}Y_1(x,y)
  \end{equation*}
  and the result follows by induction, as $X_1(x,0)\in(x)^2$, $Y_1(x,y)\in(x,y)^2$ and $m>n>0$ again. The same reasoning works for $\overline{X}_i$.
\end{proof}

\section{Reparametrization of Puiseux families}
With the notation and hypothesis of the previous section, we know that
\begin{equation}\label{eq:x-and-y-in-gamma-to-order-j}
  x(t,s) = \overline{x}(t,s) + t^js^{2}\tilde{x}(t,s),\,\,\,
  y(t,s) = \overline{y}(t,s) + \alpha t^js +  t^{j+1}s\tilde{y}(t,s)
\end{equation}
for some holomorphic functions $\tilde{x}(t,s)$ and $\tilde{y}(t,s)$ at $(0,0)\in \mathbb{C}^2$ and $0\neq\alpha\in \mathbb{C}$. Write an irreducible Puiseux expansion of $\Gamma$ as
\begin{equation}\label{eq:first-puiseux-expansion}
  \Gamma \equiv
  \left\{
    \begin{array}{l}
      x = t^n\\
      y = t^m + \sum_{i>m} a_it^i
    \end{array}
  \right.
\end{equation}
(recall that $m>n>0$). We already know that $(\overline{x}(t,s),\overline{y}(t,s))\in \Gamma$ for all $(t,s)$ in a neighbourhood of $(0,0)$, so that we should be able to ``rewrite'' $(\overline{x}(t,s),\overline{y}(t,s))$ as \eqref{eq:first-puiseux-expansion} somehow for each $s$. Indeed, by Lemma \ref{lem:solutions-order-n+1}, $\overline{X}_i(x_0(t),y_0(t))$ has order greater than $n$ for $i>0$. Then
\begin{equation}
  \label{eq:pre-puiseux-family}
  \left\{
    \begin{array}{l}
      \overline{x} (t,s) = t^n + \sum_{i>0}\overline{x}_i(t)s^i\\
      \overline{y}(t,s) = t^m + \sum_{i>0}\overline{y}_i(t)s^i
    \end{array}
  \right.
\end{equation}
for holomorphic functions $\overline{x}_i(t), \overline{y}_i(t)$ at $0\in\mathbb{C}$ with $\ord_t\overline{x}_i(t)>n$. This allows us to compute the $n-$th root of
\begin{equation*}
   1 + \sum_{i>0}\frac{\overline{x}_i(t)}{t^n}s^i=(\xi_n+st\tilde{u}(t,s))^n
 \end{equation*}
 where $\xi_n$ is an $n-$th root of unity and $\tilde{u}(t,s)$ is a holomorphic function at $(0,0)\in \mathbb{C}^2$. Thus, the function
 \begin{equation*}
   u(t,s) = t(\xi_n + st\tilde{u}(t,s))
 \end{equation*}
satisfies
 \begin{equation}
  \label{eq:first-change-of-vars}
  u(t,s)^n = \big(t ( \xi_n + st\tilde{u}(t,s) )\big)^n = t^n + \sum_{i>0}\overline{x}_i(t)s^i = \overline{x}(t,s)
\end{equation}
This defines a change of variables $\left\{u=u(t,s),s=s\right\}$
 whose inverse
 \begin{equation}
   \label{eq:change-of-vars-1}
   \left\{t(u,s) = u(\overline{\xi}_n + su \tilde{t}(u,s)), s=s\right\}
 \end{equation}
 (where $\tilde{t}(u,s)$ is a holomorphic function at $(0,0)\in \mathbb{C}^2$ and $\overline{\xi}_n$ is the complex conjugate of $\xi_n$) is holomorphic at $(0,0)$ and provides the desired equality:
 \begin{equation*}
  \label{eq:puiseux-family}
  \left\{
    \begin{array}{l}
      \overline{x} (u,s) = u^{n}\\
      \overline{y} (u,s) = u^m + \sum_{i>m}a_iu^i
    \end{array}
  \right.   
 \end{equation*}
valid for all $(u,s)$ in a neighbourhood of $(0,0)\in \mathbb{C}^2$. Notice that this is true for one of the $n-$th roots of unity $\xi_n$ (for the others, each $a_i$ is multiplied by $\tilde{\xi}_n^i$ for another root $\tilde{\xi}_{n}$). This  result \emph{does not mean} that $\Gamma$ is composed of fixed points of $\overline{E}$, as $u$ is not the initial parameter of \eqref{eq:first-puiseux-expansion}. It means that $\Gamma$ is \emph{invariant} by $\overline{E}$.

We now study $E$ in this new system of coordinates. Notice that the change of variables \eqref{eq:change-of-vars-1} satisfies, for all $k\in \mathbb{N}$:
\begin{equation*}
  t(u,s)^k = \big(u(\overline{\xi}_n+su\tilde{t}(u,s))\big)^k = u^k\overline{\xi}_n^k + su^{k+1}\underline{t}(u,s)
\end{equation*}
for some holomorphic function $\underline{t}(u,s)$ at $(0,0)\in \mathbb{C}^2$; this implies, by \eqref{eq:x-and-y-in-gamma-to-order-j}, as $j>m>n$, that the solution $(x(t,s),y(t,s))$ of $E$ has the expression
\begin{equation*}
  \left\{
    \begin{array}{l}
      x (u,s) = u^{n} + su^j(\underline{x}(u,s))\\
      y (u,s) = u^m + \sum_{i>m}a_iu^i + \alpha^{\prime}su^j +
      u^{j+1}s\underline{y}(u,s)
    \end{array}
  \right.   
\end{equation*}
for some $\alpha^{\prime} \neq 0$ and holomorphic functions $\underline{x}(u,s)$ and $\underline{y}(u,s)$ at $(0,0)\in \mathbb{C}^{2}$. Compute the $n-$th root
\begin{equation*}
  1 + su^{j-n}\underline{x}(u,s) = \left(\eta_n + su^{j-n}\tilde{v}(u,s)\right)^n
\end{equation*}
for some holomorphic function $\tilde{v}(u,s)$ at $(0,0)\in \mathbb{C}^2$.
Let
\begin{equation*}
  v(u,s) = u(\eta_n + su^{j-n}\tilde{v}(u,s)).
\end{equation*}
As above, the change of variables $\left\{ v=v(u,s), s = s \right\}$ has an inverse
\begin{equation*}
  \left\{ u(v,s) = v \left( \overline{\eta}_n + sv^{j-n}\underline{u}(v,s)\right),
    s = s  \right\}
\end{equation*}
which satisfies, for all $k\in \mathbb{N}$:
\begin{equation*}
  u(v,s)^k = v^k\overline{\eta}_n^k + sv^{j+(k-n)}u^{\prime}(v,s)
\end{equation*}
for some holomorphic $u^{\prime}(v,s)$. As $j>m>n$, we get, in the coordinates $(v,s)$ (again, for one of the $n-$th roots of unity $\eta_n$):
\begin{equation}
    \label{eq:reparametrization}
    \left\{
    \begin{array}{l}
      x (v,s) = v^{n}\\
      y (v,s) = v^m + \sum_{i>m}a_iv^i + \beta s v^j +
      v^{j+1}sy^{\prime}(v,s)
    \end{array}
  \right.   
\end{equation}
for some function $y^{\prime}(v,s)$ holomorphic in a neighbourhood of $(0,0)\in \mathbb{C}^2$ and $\beta\neq 0$.

As a consequence:
\begin{lemma}\label{lem:reparametrization}
  In the conditions of the previous section, there is an $s_j\in \mathbb{C}$ such that the curve $(x(t,s_{j}),y(t,s_j))$ admits a reparametrization
  \begin{equation*}
        \left\{
    \begin{array}{l}
      x (v,s_j) = v^{n}\\
      y (v,s_j) = v^m + \sum_{m<i<j}a_iv^i +
      v^{j+1}s_j y^{\prime}(v,s_j)
    \end{array}
  \right.   
\end{equation*}
that is, the term of order $j$ can be removed from the Puiseux expansion using the flow associated to a vector field, without modifying the previous ones.
\end{lemma}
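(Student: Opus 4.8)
The plan is to observe that all the analytic work has already been carried out in deriving the reparametrization \eqref{eq:reparametrization}, so that the lemma now reduces to solving a single linear equation in the flow parameter. Recall that \eqref{eq:reparametrization} expresses, in the coordinate $v$ and for each value of $s$, the solution of $E$ as
\[
  x(v,s)=v^{n},\qquad
  y(v,s)=v^{m}+\sum_{i>m}a_iv^{i}+\beta s\,v^{j}+v^{j+1}s\,y^{\prime}(v,s),
\]
with $\beta\neq 0$. Reading off the coefficient of $v^{j}$, I note that it equals exactly $a_j+\beta s$: the sum $\sum_{i>m}a_iv^i$ contributes $a_j$, the term $\beta s v^j$ contributes $\beta s$, and $v^{j+1}s\,y^{\prime}(v,s)$ is of order at least $j+1$ and contributes nothing. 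Crucially, for $m<i<j$ the coefficient of $v^{i}$ is the untouched $a_i$, since both flow contributions are supported in orders $\geq j$; this is precisely the ``without modifying the previous ones'' clause.

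First I would set $s_j:=-a_j/\beta$, which is well defined because $\beta\neq 0$ (and which I may assume nonzero, the case $a_j=0$ being vacuous); this is the $s_j$ whose existence the lemma asserts. Evaluating \eqref{eq:reparametrization} at $s=s_j$ makes the coefficient of $v^j$ equal to $a_j+\beta s_j=0$, so the term of order $j$ vanishes, while the coefficients $a_i$ with $m<i<j$ persist. Every remaining contribution has order $\geq j+1$; gathering the leftover tail $\sum_{i>j}a_iv^i$ together with $v^{j+1}s_j\,y^{\prime}(v,s_j)$, factoring out $v^{j+1}$ and dividing by the nonzero constant $s_j$, I would rename the resulting holomorphic function so as to absorb everything into a single term $v^{j+1}s_j\,y^{\prime}(v,s_j)$, obtaining the stated form of $y(v,s_j)$, while $x(v,s_j)=v^n$ is already clean.

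It then remains to justify that this is a genuine \emph{reparametrization} of the curve $(x(t,s_j),y(t,s_j))$: the passage from $t$ to $v$ is the composition of the biholomorphism \eqref{eq:change-of-vars-1} with its analogue $u=u(v,s)$ from the second change of variables, both evaluated at the fixed value $s=s_j$. Each reduces to multiplication by an $n$-th root of unity at $s=0$ and hence is a local biholomorphism of the parameter for $s$ near $0$, so the composite is too, and $v$ is a bona fide analytic coordinate on the curve.

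The hard part will not be algebraic but domain-theoretic: equation \eqref{eq:reparametrization} is an identity of convergent series valid only for $(v,s)$ near the origin, so I must ensure that the value $s_j=-a_j/\beta$ forced by the linear equation actually lies in the disc on which the flow of $E$ is defined. I would handle this by a harmless preliminary rescaling $t\mapsto\rho t$, followed by renormalizing $x$ and $y$, under which $a_j$ is replaced by $a_j\rho^{\,j-m}\to 0$ as $\rho\to 0$; this forces the required flow time $s_j$ to be as small as needed, after which one undoes the rescaling. I therefore expect to spend the real care here and to treat the coefficient bookkeeping above as routine.
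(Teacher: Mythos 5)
Your algebraic step is exactly the paper's: the published proof also just takes $s_j=-a_j/\beta$ in \eqref{eq:reparametrization}, and the coefficient bookkeeping (the $v^j$-coefficient becomes $a_j+\beta s_j=0$, lower-order terms untouched, the tail absorbed into $v^{j+1}s_jy^{\prime}$) is the routine part. The genuine gap is in what you yourself call the hard part, and your proposed fix fails. Under the rescaling $t\mapsto\rho t$ followed by $(x,y)\mapsto(\rho^{-n}x,\rho^{-m}y)$, the coefficient $a_j$ indeed becomes $a_j\rho^{j-m}$, but you must also replace $\omega$ (hence $E$, hence $\beta$) by data adapted to the rescaled curve: the natural choice is the pullback of $\omega$ by $(x,y)\mapsto(\rho^{n}x,\rho^{m}y)$, and a direct computation shows its contact coefficient with the rescaled curve is $\rho^{j+n}$ times the original one. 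Since $\beta$ is, up to a nonzero constant and roots of unity, that contact coefficient, the required flow time becomes $s_j^{(\rho)}=-a_j\rho^{j-m}/(\beta\rho^{j+n})=s_j\rho^{-(m+n)}$, which \emph{blows up} as $\rho\to 0$ rather than shrinking. Rescaling $\omega$ by a constant $c$ cannot repair this: it divides the needed time by $c$ but also divides the radius of convergence of the flow in $s$ by $|c|$, so nothing changes. More conceptually, the rescaled vector field equals $\rho^{m+n}$ times the pushforward of $E$ under the linear map, so your rescaled problem is \emph{conjugate} to the original one; since the rescaling is itself an analytic equivalence, it is powerless to change whether the target curve is reachable within the domain of analyticity. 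Your argument is, in effect, circular.

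The paper resolves the domain issue with a different and genuinely necessary idea: $E$ in \eqref{eq:two-diffeqs} is an \emph{autonomous} system whose right-hand side vanishes at the origin (a singular point), and for such systems, for every $M>0$ there is a neighbourhood $V$ of the origin such that all solutions with initial conditions in $V$ converge for $|s|<M$ (Proposition 1.19, p.~12 of \cite{ilyashenko-yakovenko-lectures}). Since only the germ of $\Gamma$ at $0$ matters, one shrinks the $t$-neighbourhood until the fixed finite time $s_j=-a_j/\beta$ lies inside the disc of convergence, and the identity \eqref{eq:reparametrization}, established for $(v,s)$ near $(0,0)$, then holds at $s=s_j$ by analytic continuation. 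This long-time existence near the singularity is also what legitimizes evaluating your two parameter changes $u(t,s)$ and $v(u,s)$ at the (possibly not small) value $s=s_j$, a point your biholomorphism argument quietly assumes when it says ``for $s$ near $0$.'' Without this observation, or an equivalent quantitative estimate, the proof is incomplete.
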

\begin{proof}
  Take $s_j=-a_j/\beta$ in \eqref{eq:reparametrization}. Recall that the flow $\psi_s(x,y)$ is the map sending $(x,y)$ to the value at time $s$ of the solution of $E$ with initial condition $(x(0),y(0))=(x,y)$.

  Notice that $E$ is an autonomous system at a singular point, so that for any $M$, there is a neighbourhood $V$ of $t=0$ such that 
$(x(t,s),y(t,s))$ converges for $|s|<M$ and $t\in V$. (see \cite{ilyashenko-yakovenko-lectures}, Proposition 1.19, p. 12): hence, the argument works for whatever value $a_j$.
\end{proof}

\section{Elimination of terms using holomorphic vector fields}
Our main result is now a corollary of the previous ones.
\begin{theorem}
  Let $\Gamma$ have a Puiseux parametrization of the form \eqref{eq:puiseux}
  with $n<m$ and $n$ not dividing $m$. Then $\Gamma$ is analytically equivalent
  to $\Gamma^{\prime}$ with Puiseux parametrization
  \begin{equation*}
    \Gamma^{\prime} \equiv \bigg(
    t^n, t^m + a_{\lambda}^{\prime}t^{\lambda} +
    \sum_{\substack{i>\lambda\\ i+n\not\in\Lambda}}a^{\prime}_it^i
    \bigg).
  \end{equation*}
  Moreover, each elimination of a single term can be carried out by means of the
  time-$s$ flow associated to a holomorphic vector field (one for each term).
\end{theorem}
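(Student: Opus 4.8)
The plan is to deduce the theorem from Lemma~\ref{lem:reparametrization} by eliminating the Puiseux coefficients one order at a time. For each order $j$ that we wish to kill we must manufacture a holomorphic field $E\equiv(\dot x=X_1,\dot y=Y_1)$ meeting the Further Assumptions of Section~2 ($Y_1\in(x,y)^2$ and $\ord_x X_1(x,0)\ge 2$) and colinear, along $\Gamma$, to the tangent $(\dot{x}_0,\dot{y}_0)$ to order exactly $j+n-1$ with nonzero contact coefficient; Lemmas~\ref{lem:solutions-are-approximate} and~\ref{lem:reparametrization} then remove $a_j t^j$ while leaving every coefficient of order $<j$ intact.

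The first move is a dictionary between fields and one-forms. The colinearity determinant of $(X_1,Y_1)$ against $(\dot{x}_0,\dot{y}_0)$ is $X_1\dot{y}_0-Y_1\dot{x}_0=\phi^{\ast}(X_1\,\dy-Y_1\,\dx)$, so setting $\omega=A\,\dx+B\,\dy$ with $A=-Y_1$ and $B=X_1$ converts the colinearity requirement into $\ord_t\phi^{\ast}\omega=j+n-1$, that is $j+n\in\Lambda$, and the Further Assumptions into $A\in(x,y)^2$ and $B(x,0)\in(x^2)$; I shall call a form with these two properties \emph{admissible}. Everything thus reduces to the existence statement: for every $j>m$ with $j+n\in\Lambda$ and $j\ne\lambda$ there is an admissible $\omega$ of contact $j+n-1$.

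To settle this I would expand $\phi^{\ast}\omega$ monomial by monomial. Because $m>n$ and $n\nmid m$, the forms $\dx$, $x\,\dx$, $\dy$ are the unique lowest contributors at orders $n-1$, $2n-1$, $m-1$, and $y\,\dx$, $x\,\dy$ the only ones at order $m+n-1$; hence any $\omega$ of contact exceeding $m+n-1$ satisfies $A_0=a_{10}=B_0=0$ and $a_{01}n+b_{10}m=0$. The one non-admissible direction left is the weighted-Euler form $\omega_{\ast}=m\,y\,\dx-n\,x\,\dy$, whose field $(-nx,-my)$ merely rescales the parameter and whose pullback $\phi^{\ast}\omega_{\ast}=\sum_{i>m}n a_i(m-i)t^{i+n-1}\,dt$ has contact $i_0+n-1$, where $t^{i_0}$ is the first term past $t^m$. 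The crux — and the step I expect to be the main obstacle — is to upgrade this into the clean statement that every form of contact $>m+n-1$ equals an admissible form plus a multiple of $\omega_{\ast}$.

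Granting that decomposition, the theorem assembles by eliminating in increasing order. While the first surviving term sits at $i_0<\lambda$, the form $\omega_{\ast}$ certifies $i_0+n\in\Lambda$ and minimality of $\lambda$ gives $i_0+n\in S_\Gamma$, whence $i_0+n\ne\lambda+n$ and the admissible summand of a contact-$(i_0+n-1)$ form already has contact $i_0+n-1$; Lemma~\ref{lem:reparametrization} clears $a_{i_0}$. Iterating reaches $t^m+a_\lambda t^\lambda+\hot$, and we keep $t^\lambda$: by Zariski's theorem it is a genuine invariant, so no admissible form has contact $\lambda+n-1$ (such a form would remove $t^\lambda$ via Lemma~\ref{lem:reparametrization}). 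For $j>\lambda$ with $j+n\in\Lambda$ the $\omega_{\ast}$-summand, having the smaller contact $\lambda+n-1$, is forced to vanish, so every contact-$(j+n-1)$ form is admissible and clears $a_j$, the order-$(j+n-1)$ colinearity fixing all lower coefficients; the orders with $j+n\notin\Lambda$ are unreachable and survive as the $a_i'$. Finally each $i\ge c$ has $i+n\in S_\Gamma\subseteq\Lambda$ and is eliminable, so only finitely many orders below the conductor yield new fields, and the infinite tail — each field acting only in orders $\ge j$ — is removed by the coefficientwise convergence of the composition of flows; this convergence is the one technical point secondary to the structural lemma above.
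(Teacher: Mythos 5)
Your overall skeleton is the paper's: dualize $\omega=A\,dx+B\,dy$ to the field $X_1=B$, $Y_1=-A$, translate contact into colinearity, and feed admissible fields into Lemmas~\ref{lem:solutions-are-approximate} and~\ref{lem:reparametrization}; your decomposition of any form of contact $>m+n-1$ as $c\,\omega_{\ast}+\omega_{\mathrm{adm}}$ with $\omega_{\mathrm{adm}}$ admissible is correct (and implicit in the paper), and your treatment of $j>\lambda$ (the $\omega_{\ast}$-component must vanish because, by Zariski's invariance of $\lambda$ together with Lemma~\ref{lem:reparametrization}, no admissible form can have contact $\lambda+n-1$) is a legitimate alternative to the paper's direct computation with the coefficients $a_{10},a_{01},b_{10}$. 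The first genuine gap is in the regime $i_0<\lambda$: the assertion that ``the admissible summand of a contact-$(i_0+n-1)$ form already has contact $i_0+n-1$'' is false as stated, and it does not follow from $i_0+n\in S_{\Gamma}$. Take $\eta=\omega_{\ast}$ itself: since $i_0$ is the first exponent past $m$, $\phi^{\ast}\omega_{\ast}$ has order exactly $i_0+n-1$, yet the admissible summand of $\omega_{\ast}$ is $0$. Precisely because $\omega_{\ast}$ has contact $i_0+n-1$ in this regime, the leading terms of $c\,\omega_{\ast}$ and $\omega_{\mathrm{adm}}$ can cancel, so the decomposition gives no control whatsoever on $\ord_t\phi^{\ast}\omega_{\mathrm{adm}}$. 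What you need is the \emph{existence} of an admissible form of contact exactly $i_0+n-1$, and that requires a construction. The paper gets it from Zariski's description of the elements of $\Lambda$ below $\lambda+n$: either $i_0\in S_{\Gamma}$, and $\omega=g\,dx$ with $\ord_t g(\phi(t))=i_0$ works ($g\in(x,y)^2$ because $i_0>m$ and $n\nmid m$), or $i_0+n=pm$ with $p>1$, and $\omega=y^{p-1}dy$ works. (Your hypothesis $i_0+n\in S_{\Gamma}$ could also be exploited directly: the exact form $dg$ with $\ord_t g(\phi(t))=i_0+n$ turns out to be admissible of contact $i_0+n-1$, because the coefficients of $x^2$ and $xy$ in $g$ are forced to vanish; but no such argument appears in your text.)

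The second gap is the tail. You compose one flow for each exponent $i\geq c$ --- infinitely many --- and appeal to ``coefficientwise convergence of the composition of flows''. Coefficientwise convergence only yields a \emph{formal} change of coordinates, while the theorem claims \emph{analytic} equivalence; an infinite composition of time-$s_j$ flows, each holomorphic on its own (possibly shrinking) polydisc, has no reason to converge to a holomorphic map, and passing from formal to convergent equivalence is not a routine step. This is not a secondary technicality: it is exactly the point the paper is careful to avoid. In the paper only the finitely many removable exponents $j<c$ are treated by flows, and all terms of order $\geq c$ are then removed \emph{at once} by a single analytic change of variables, quoting Section 1 of Chapter III of Zariski's monograph. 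Replacing your limiting argument by that citation (or by an actual proof of it) closes the gap.
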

If $\lambda=\infty$, then $\Gamma$ is equivalent to $(t^n, t^m)$ and the
argument below works anyway. Notice how after the composition of a finite number
of flows, we have eliminated all the (removable) terms of order less than
$c$. The terms from $c$ on can be removed at once using a single diffeomorphism
(as shown in \cite{Zariski4}, Section 1 of Chapter III). Therefore, we shall
only deal with the elimination of the term of least order $j$ with
$j+n\in \Lambda$ and $a_j\neq 0$, provided the terms of lower order remain
unchanged.

\begin{proof}
  Take a parametrization of $\Gamma$ like \eqref{eq:puiseux} (recall that $n$ does not divide $m$):
  \begin{equation*}
    \Gamma=\phi(t)=(x(t),y(t))=\bigg(
    t^n, t^m + \sum_{i>m} a_it^i
    \bigg),
  \end{equation*}
  Let $j>m$ be the minimum integer such that $a_j\neq 0$, $j+n\in \Lambda$ and
  $j\neq \lambda$. By definition, there is a holomorphic differential form
  $\omega$ such that
    \begin{equation*}
    \omega=A(x,y)dx+B(x,y)dy,\;\mathrm{with}
    \;\;\ord_t(\phi^{\ast}\omega)=j+n-1.
  \end{equation*}
  We wish to apply Lemmas \ref{lem:solutions-are-approximate} and
  \ref{lem:reparametrization} using the differential equation (vector field) $E$ ``dual'' to $\omega$, and the corresponding $\overline{E}$ in \eqref{eq:two-diffeqs}:
  \begin{equation*}
    E\equiv \left\{
      \begin{array}{l}
        \dot{x} = B(x,y)\\
        \dot{y} = -A(x,y)
      \end{array}
    \right.\,\,\,
    \overline{E}\equiv \left\{
      \begin{array}{l}
        \dot{x} = f_y\left(B/f_y\right)(x,y) =B(x,y)\\
        \dot{y} = -f_x\left(B/f_y\right)(x,y)
      \end{array}
    \right.
  \end{equation*}
  so that we need to verify that $A(x,y)\in(x,y)^2$
  and $\ord_xB(x,0)\geq 2$. Notice that the colinearity condition between
  $u(t)=(B(x(t),y(t)),-A(x(t),y(t)))$ and
  $v(t)=(\dot{x}(t),\dot{y}(t))$ to order $j+n-1$ is provided by the
  contact between $\omega$ and $\Gamma$. There are two cases: $j<\lambda$ and
  $j>\lambda$.

  If $j<\lambda$ then, by definition of $\lambda$, we have
  $j\in S_{\Gamma}$. Either $j=s+n$ for $s\in S_{\Gamma}$ and we can take
  $A(x,y)=g(x,y)$ with $\ord_tg(x(t),y(t))=j$ and $B(x,y)=0$ (notice that
  $g(x,y)\in(x,y)^2$ because $j>m$ and $n$ does not divide $m$). Or (see
  \cite{Zariski-1966} pp. 785-786 or \cite{Zariski4} p. 23, last paragraph)
  we have $j+n=pm$ for $p>1$ and we can take $\omega=y^{p-1}dy$, for which
  $A(x,y)=0$ and $\ord_xB(x,0)\geq 2$.

  Assume that $j>\lambda$ and let $\omega$ be as above.
  Write $A(x,y)=a_{10}x + a_{01}y + \dots$ and
  $B(x,y)=b_{10}x + b_{01}y + \dots$. Substituting the parametrization of
  $\Gamma$ into $\omega$ gives
  \begin{equation*}
    \begin{split}
    \phi^{\ast}\omega = (&na_{10}t^{2n-1} + (na_{01} + mb_{10})t^{n+m-1}
    + (na_{01}+\lambda b_{10})a_{\lambda}t^{\lambda+n-1}
    +\hot\\
    + &mb_{01}t^{2m-1} + (m+\lambda)b_{01}a_{\lambda}t^{\lambda+m-1} + \hot)dt.
    \end{split}
  \end{equation*}
  If $a_{10}\neq 0$ then $j=n$, which is impossible. If the coefficient
  $(na_{01}+mb_{10})$ is not zero, then $\ord_t\phi^{\ast}\omega=n+m-1$ because
  $n$ does not divide $m$ (so that this term cannot be made zero either by any
  $a_{kl}x^ky^l$ or $b_{kl}x^ky^l$); but this would imply that $j=m<\lambda$,
  against the assumption. If now $a_{01}\neq 0$ then $mb_{10}=-na_{01}$ so that
  $b_{10}\neq 0$; however, in this case
  $(na_{01}+\lambda b_{10})a_{\lambda}\neq 0$ and one has the following
  possibilities:
  \begin{itemize}
  \item $2m-1<\lambda+n-1$, which would imply that $j=2m-n<\lambda$,
    against the assumption $j>\lambda$.
  \item $\lambda+n-1<2m-1$, which would imply that $j= \lambda$, against the
    same assumption.
  \item $\lambda+n-1=2m-1$, which would imply that $\lambda=2m-n$, which
    contradicts the definition of $\lambda$.
  \end{itemize}
  Hence, we must have $a_{10}=a_{01}=0$ and from this $b_{10}=0$, as otherwise
  $j=m<\lambda$. Thus, $A(x,y)\in(x,y)^2$ and
  $\ord_xB(x,0)\geq 2$.

  We have concluded, in any case, taking $X_1=B(x,y)$ and $Y_1=-A(x,y)$, that we
  are in the conditions of Lemmas \ref{lem:solutions-are-approximate} and
  \ref{lem:reparametrization}, and the result follows considering the flow
  $\psi_s(x,y)$ associated to $E$ (recall that this flow sends $(x,y)$ to the
  image at time $s$ of the solution of $E$ with initial condition $(x,y)$).
\end{proof}

Simply speaking, the flow $\psi_s$ corresponding to $E$ produces (after a
re\-pa\-ra\-metri\-zation) a translation proportional to $s$ in the $j$-th term
of the $y$-component of the Puiseux expansion of $\Gamma$ (and nothing before
that term), which permits the elimination of this term. The terms farther than
$j$ are modified holomorphically.
    
  The proof does not work for $j=\lambda$. Take $\omega = -mydx+nxdy$. One has:
  \begin{equation*}
    \lambda+n =
    \ord_t(\omega(t^n, t^m + t^{\lambda} + \sum_{i>\lambda} a_it^{i}))+1,
  \end{equation*}
  so that $\lambda+n$ is the contact of $\Gamma$ with a differential form of
  order $1$ on each component.\par
\vspace*{10pt}
\textbf{Acknowledgement:\/} The redaction of this paper has improved greatly thanks to an anonymous reviewer.


\end{document}